\def\R{\Bbb R} \def\C{\Bbb C}  \def\O{\Bbb O} 
  \def\P{\Bbb P}   
 \def\G{\Bbb G} \def\Z{\Bbb Z}
\newtheorem{theo}{Theorem}[section] 
\newtheorem{lem}[theo]{Lemma} 
\newtheorem{pro}[theo]{Proposition} 
\newtheorem{rem}[theo]{Remark} 
\newtheorem{cor}[theo]{Corollary}
\title{The Degree of an Eight-Dimensional Real Quadratic Division Algebra is 1, 3, or 5}  
\author{Ernst Dieterich and Ryszard Rubinsztein} 
\begin{document} 
\date{} 
\maketitle 

\begin{abstract}
\noindent 
A celebrated theorem of Hopf, Bott, Milnor, and Kervaire \cite{Ho40},\cite{BM58},\cite{Ke58} states that every finite-dimensional real division algebra has dimension 
1, 2, 4, or 8. While the real division algebras of dimension 1 or 2 and the real quadratic division algebras of dimension 4 have been classified 
\cite{Di05},\cite{Di98},\cite{DO02}, the problem of classifying all 8-dimensional real quadratic division algebras is still open. We contribute to a solution of that 
problem by proving that every 8-dimensional real quadratic division algebra has degree 1, 3, or 5. This statement is sharp. It was conjectured in \cite{DF06}.   
\end{abstract} 

\noindent 
Mathematics Subject Classification 2000: 17A35, 17A45,  55P91. 
\\[1ex]
Keywords: Real quadratic division algebra, degree, real projective space, fundamental group, liftings.
  
\section{Introduction}

Let $A$ be an algebra over a field $k$, i.e.~a vector space over $k$ equipped with a $k$-bilinear multiplication $A \times A \to A,\ (x,y) \mapsto xy$. Every element 
$a \in A$ determines $k$-linear operators $L_a: A \to A,\ x \mapsto ax$ and $R_a: A \to A$, \mbox{$x \mapsto xa$}. A {\it division algebra} over $k$ is a non-zero $k$-algebra 
$A$ such that $L_a$ and $R_a$ are bijective for all $a \in A \setminus \{0\}$. A {\it quadratic algebra} over $k$ is a non-zero $k$-algebra $A$ with unity 1, such that 
for all $x \in A$ the sequence $1,x,x^2$ is $k$-linearly dependent.

Here we focus on quadratic division algebras which are real \mbox{(i.e.~$k = \R$)} and finite-dimensional. They form a category $\mathscr{D}^q$ whose morphisms \linebreak 
\mbox{$f: A \to B$} are non-zero linear maps satisfying $f(xy) = f(x)f(y)$ for all $x,y \in A$. For any positive integer $n$, the class of all $n$-dimensional objects in 
$\mathscr{D}^q$ forms a full subcategory $\mathscr{D}_n^q$ of $\mathscr{D}^q$. The 
(1,2,4,8)-theorem for finite-dimensional real division algebras implies that $\mathscr{D}^q = \mathscr{D}_1^q \cup \mathscr{D}_2^q \cup \mathscr{D}_4^q \cup 
\mathscr{D}_8^q$. It is well-known (see e.g.~\cite{DO02}) that both $\mathscr{D}_1^q$ and $\mathscr{D}_2^q$ consist of one isoclass only, represented by $\R$ and $\C$ 
respectively. The category $\mathscr{D}_4^q$ is no longer trivial, but still its structure is well-understood and its objects are classified by a 9-parameter family 
\cite{Di98},\cite{Di00},\cite{DO02}. In contrast, the category $\mathscr{D}_8^q$ seems to be much more difficult to approach. The problem of understanding its structure 
appears to have a very hard core, which still is far from a finishing solution. Among the insight so far obtained, the {\it degree} of an 8-dimensional real quadratic 
division algebra is notable. Following \cite{DF06} it is a natural number $\deg(A)$ associated with any $A \in \mathscr{D}_8^q$, which is invariant under isomorphisms and 
satisfies the estimate $1 \le \deg(A) \le 5$. Examples of algebras $A \in \mathscr{D}_8^q$ having degree 1, 3, and 5 respectively are also given in 
\cite{DF06}. For every $d \in \{ 1,\ldots,5 \}$, the class of all algebras in $\mathscr{D}_8^q$ having degree $d$ forms a full subcategory $\mathscr{D}_8^{qd}$ of 
$\mathscr{D}_8^q$. 

In the present article we resume the investigation of $\deg(A)$. In section 2 we apply topological arguments to prove that $\deg(A)$ always is odd. It follows that the 
category $\mathscr{D}_8^q$ decomposes into three non-empty blocks $\mathscr{D}_8^{q1}, \mathscr{D}_8^{q3}$, and $\mathscr{D}_8^{q5}$. In section 3 we summarize our structural 
insight into these three blocks, concluding that the problem of understanding the structure of $\mathscr{D}_8^q$ has been reduced to the problem of understanding the 
structure of the category of all 7-dimensional {\it dissident algebras} having degree 3 or 5. 

\section{Main result}

Towards our main result we need to recall a few facts related to the notion of a dissident map. Let $V$ be a vector space over a field $k$. Following \cite{DL03}, a 
{\it dissident map} on $V$ is a $k$-linear map $\eta: V \wedge V \to V$ such that $v, w, \eta(v \wedge w)$ are $k$-linearly independent whenever $v,w$ are. It is well-known 
that a finite-dimensional real vector space $V$ admits a dissident map if and only if $\dim(V) \in \{0,1,3,7\}$ \cite[Proposition 7]{Di99}.

Let $\eta: \R^7 \wedge \R^7 \to \R^7$ be a dissident map. We equip $\R^7$ with the standard scalar product $\R^7 \times \R^7 \to \R,\ (v,w) \mapsto v^tw$. For every $v \in 
\R^7 \setminus \{0\}$, the subspace $\eta(v \wedge v^\perp) = \{ \eta(v \wedge w)\ |\ w \in v^\perp \}$ in $\R^7$ is a hyperplane, which only depends on the line $[v]$ 
spanned by $v$. Thus $\eta$ induces a map 
\[ \eta_\P: \P(\R^7) \to \P(\R^7),\ \eta_\P([v]) = \left( \eta(v \wedge v^\perp) \right)^\perp, \]  
which actually is bijective \cite[Proposition 2.2]{DL03}. Following \cite{DF06}, a {\it lifting} of $\eta_\P$ is a map $\Phi: \R^7 \to \R^7,\ \Phi(v) = (\varphi_1(v),\ldots,
\varphi_7(v))$, satisfying the following three conditions: (a) all component maps $\varphi_1,\ldots,\varphi_7$ are homogeneous real polynomials of common degree $d \ge 1$; 
(b) if $v \in \R^7 \setminus \{0\}$, then $\Phi(v) \not= 0$ and $[\Phi(v)] = \eta_\P([v])$; (c) the polynomials $\varphi_1,\ldots,\varphi_7$ are relatively prime.

According to \cite[Theorem 2.4]{DF06} a lifting $\Phi$ of $\eta_\P$ exists, is unique up to non-zero real multiples, and satisfies $1 \le d \le 5$. It is therefore justified 
to define the {\it degree} of a dissident map $\eta$ on $\R^7$ by $\deg(\eta) := \deg(\Phi) := d$. 
\begin{pro}
The degree of any dissident map on $\R^7$ is odd.
\end{pro}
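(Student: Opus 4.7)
The plan is to argue by contradiction using covering space theory applied to the double cover $\pi\colon S^6 \to \P(\R^7)$. Since $\Phi$ is homogeneous of positive degree $d$ and nowhere zero on $\R^7 \setminus \{0\}$, one would first normalize it to obtain a continuous map $\widetilde{\Phi}\colon S^6 \to S^6$, $\widetilde{\Phi}(v) = \Phi(v)/\|\Phi(v)\|$, satisfying $\pi \circ \widetilde{\Phi} = \eta_{\P} \circ \pi$ by virtue of property (b) of a lifting. Homogeneity of degree $d$ then gives the equivariance relation $\widetilde{\Phi}(-v) = (-1)^d \widetilde{\Phi}(v)$, which is the only place where the parity of $d$ enters.

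Assume, towards contradiction, that $d$ is even. Then $\widetilde{\Phi}$ is antipode-invariant, hence constant on the fibers of $\pi$. Since $\pi$ is a quotient map, $\widetilde{\Phi}$ factors continuously as $\widetilde{\Phi} = g \circ \pi$ for some continuous $g\colon \P(\R^7) \to S^6$. Substituting into the commutative square and using surjectivity of $\pi$ yields $\pi \circ g = \eta_{\P}$; that is, $g$ is a continuous lift of $\eta_{\P}$ through the universal covering $\pi\colon S^6 \to \P(\R^7)$.

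The standard lifting criterion would then force $(\eta_{\P})_*\bigl(\pi_1(\P(\R^7))\bigr) \subset \pi_*\bigl(\pi_1(S^6)\bigr) = 0$, so $(\eta_{\P})_*\colon \pi_1(\P(\R^7)) \to \pi_1(\P(\R^7))$ must be the zero homomorphism. To finish, one would show that this is in fact false: by \cite[Proposition 2.2]{DL03}, $\eta_{\P}$ is a continuous bijection from the compact space $\P(\R^7)$ to the Hausdorff space $\P(\R^7)$, hence a homeomorphism. Therefore $(\eta_{\P})_*$ is an automorphism of $\pi_1(\P(\R^7)) \cong \Z/2\Z$, necessarily the identity, and in particular nontrivial. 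This contradiction forces $d$ to be odd.

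The argument is essentially two-step, and the only nonroutine ingredient is the recognition that the bijectivity of $\eta_{\P}$ (a property already recorded in \cite{DL03}) upgrades for free to a homeomorphism, thus making $(\eta_{\P})_*$ automatically nonzero on $\pi_1$. I expect the main obstacle in writing out the proof cleanly to be cosmetic: checking continuity of $\eta_{\P}$ and of $g$, and making the lifting-criterion invocation airtight with a choice of compatible base points; all of these are standard but deserve an explicit mention.
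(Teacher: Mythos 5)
Your argument is correct. It rests on the same two pillars as the paper's proof---the commutative square $\pi\circ\Phi=\eta_\P\circ\pi$ together with the sign relation $\Phi(-v)=(-1)^d\Phi(v)$, and the observation that $\eta_\P$, being a continuous bijection of the compact Hausdorff space $\P(\R^7)$, is a homeomorphism and hence induces a nontrivial homomorphism on $\pi_1(\P(\R^7))\cong\Z_2$---but it organizes them differently. The paper argues directly: it first proves an auxiliary lemma (a loop in $\P(\R^7)$ is null-homotopic if and only if a lift to $\R^7\setminus\{0\}$ ends at a positive multiple of its starting point), then pushes the nontrivial loop $\pi\circ\tilde{\alpha}$ (where $\tilde{\alpha}$ is a path from $\tilde{z}_0$ to $-\tilde{z}_0$) through $\eta_\P$, lifts the image loop via $\Phi\circ\tilde{\alpha}$, and reads off $(-1)^d<0$. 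You instead argue by contradiction: if $d$ were even, the normalized map $\widetilde{\Phi}$ would be antipode-invariant and so descend to a continuous lift $g$ of $\eta_\P$ through the double cover $S^6\to\P(\R^7)$, forcing $(\eta_\P)_*=\pi_*\circ g_*$ to factor through $\pi_1(S^6)=0$. Your route dispenses with the paper's Lemma 2.1 entirely---indeed you do not even need the full lifting criterion, only functoriality of $\pi_1$ applied to $\eta_\P=\pi\circ g$---at the price of an indirect argument; the paper's route is longer but extracts the slightly sharper direct information that $\Phi$ carries the lift of the nontrivial loop to a sign-reversing path. The routine points you flag are indeed all that remains: continuity of $\eta_\P$ follows because $\pi$ is a quotient map and $\eta_\P\circ\pi=\pi\circ\Phi$ is continuous; $\pi|_{S^6}$ is a closed surjection onto a Hausdorff space and hence a quotient map, which legitimizes the factorization $\widetilde{\Phi}=g\circ\pi$; and the base-point bookkeeping only requires noting that an isomorphism $\pi_1(\P(\R^7),z_0)\to\pi_1(\P(\R^7),\eta_\P(z_0))$ between copies of $\Z_2$ cannot be the zero map.
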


\begin{proof}
Our proof of Proposition 2.1 is topological in nature and, in particular, uses the fundamental group of the projective space $\, \P (\R ^7 )\,$.

Let $\, \pi :\R^7 \setminus \{0\} \rightarrow  \P (\R ^7 )\,$ be the quotient map mapping a vector $\, v\in \R^7 \setminus \{0\}\,$ to the line $\, [v]\in  \P (\R ^7 )\,$ 
spanned by $\, v\,$. We equip $\,\R^7 \setminus \{0\}\,$ with its standard Euclidean topology and $\, \P (\R ^7 )\,$ with the quotient topology.
For a point $\, z\in  \P (\R ^7 )\,$ let $\, \pi _1(  \P (\R ^7 ), z)\,$ be the fundamental group of  $\, \P (\R ^7 )\,$ based at $\, z\,$. We recall that 
\begin{equation*}
 \pi _1(  \P (\R ^7 ), z) = \Z _2 \,\,\, ,
\end{equation*}  
$\, \Z _2 = \{ 0, 1 \}\,$ being the group of integers modulo $2$ \cite[Section III.5]{B1}.

The map  $\, \pi :\R^7 \setminus \{0\} \rightarrow  \P (\R ^7 )\,$ is a locally trivial fibration with the fibre homeomorphic to $\,\R \setminus \{0\}\,$. We choose a point $\, z\in  \P (\R ^7 )\,$ and a vector $\, \tilde{z} \in \R^7 \setminus \{0\}\,$ such that $\, \pi (\tilde{z})=z\,$. Then every vector $\, w\,$ belonging to the fibre $\, \pi^{-1}(z)\,$ is of the form $\, w=q \tilde{z}\,$ with $\, q\in \R, \, q\ne 0.\,$

Denote by $\, I\,$ the unit interval $\,[ 0, 1 ]\,$.

Let $\, \sigma :I\rightarrow  \P (\R ^7 )\,$ be a loop in  $\, \P (\R ^7 )\,$ at the point $\, z\,$ i.e. $\,\sigma \,$ is a continuous mapping such that $\, \sigma (0) = \sigma (1)= z\,$. Since  $\, \pi :\R^7 \setminus \{0\} \rightarrow  \P (\R ^7 )\,$ is a fibration, it follows from the Homotopy Lifting Theorem, \cite[ Theorem VII.6.4]{B1}, that the loop  $\, \sigma :I\rightarrow  \P (\R ^7 )\,$ can be lifted to a continuous mapping  $\, \tilde{\sigma} :I\rightarrow \R^7 \setminus \{0\} \,$ such that $\,\pi \circ  \tilde{\sigma}= \sigma\,$ and $\, \tilde{\sigma}(0)=  \tilde{z}.\,$ (Observe that, in general, $\, \tilde{\sigma}\,$ will not be a loop but just a path.) Since $\, \sigma (1)=z,\,$ it follows that $\,  \tilde{\sigma}(1)\in  \pi^{-1}(z)\,$ and, hence, that  $\,  \tilde{\sigma}(1) =  q \tilde{z}\,$ for some $\, q=q(\sigma )\in \R, \, q\ne 0.\,$ (Actually, the real number $q$ depends not only on the loop $\, \sigma\,$ but also on the choice of the lifting $\,\tilde{\sigma}\,$ which is not unique.) The next lemma is rather obvious. We include a proof for convenience of the reader.

\begin{lem} The loop  $\, \sigma :I\rightarrow  \P (\R ^7 )\,$ represents the trivial element in $\, \pi _1(  \P (\R ^7 ), z) = \Z _2\,$ if and only if $\, q>0\,$. 
\end{lem}

\begin{proof} ($\, \Leftarrow\,$) Suppose that $\, q>0\,$. Then there is a line segment $\, \tilde{\tau}\,$  in $\, \pi ^{-1}(z)\,$ from $\, q\tilde{z}\,$ to $\, \tilde{z}\,$ consisting of points of the form $\, s\tilde{z}\,$ with $\, s\,$ between $1$ and $q$. The path product $\,\overline{\sigma} = \tilde{\sigma} * \tilde{\tau}\,$ is now a loop in $\, \R^7\setminus \{0\}\,$ starting and ending at $\, \tilde{z}\,$. The space $\,\R^7\setminus \{0\}\,$ is homotopy equivalent to the $6$-dimensional sphere $\, S^6\,$ and hence the fundamental group $\, \pi_1(\R^7\setminus \{0\}, \tilde{z} )=0\,$ is trivial. Therefore the loop   $\,\overline{\sigma} = \tilde{\sigma} * \tilde{\tau}\,$ is null-homotopic in  $\,\R^7\setminus \{0\}\,$. It follows that the loop $\, \pi\circ\overline{\sigma} = (\pi \circ \tilde{\sigma})* (\pi\circ \tilde{\tau})= \sigma * (\pi\circ \tilde{\tau})\,$ is null-homotopic in $\, \P (\R ^7 ).\,$  Since $\, \pi\circ \tilde{\tau}\,$  is a constant loop, the loops  $\,\sigma * (\pi\circ \tilde{\tau})\,$ and   $\,\sigma \,$ are homotopic. Therefore the loop $\, \sigma\,$ is null-homotopic in  $\, \P (\R ^7 )\,$ and represents the trivial element in   $\, \pi _1(  \P (\R ^7 ), z).\,$ 

($\, \Rightarrow\,$) Suppose that the loop  $\, \sigma :I\rightarrow  \P (\R ^7 )\,$ represents the trivial element in $\, \pi _1(  \P (\R ^7 ), z).\,$ Thus there exists a continuous mapping (homotopy)\break $\, H: I\times I \rightarrow  \P (\R ^7 )\,$ such that $\, H(t,0)=\sigma (t)\,$ and $\, H(0,s)=H(1,s)=H(t,1)=z\,$ for all $t, s \in I\,$. Again, according to the Homotopy Lifting Theorem there exists a continuous mapping (lift) $\,\tilde{H}:I\times I \rightarrow \R^7\setminus \{0\}\,$ such that $\,\pi \circ\tilde{H}= H\,$ and $\, \tilde{H}(t,0)=  \tilde{\sigma}(t), \,\tilde{H}(0,s)=\tilde{z}\,$ and $\, \tilde{H}(1,s)=\tilde{\sigma}(1)\,$ for all  $t, s \in I\,$.  It follows, in particular, that $\, \tilde{H}(t,1)\in \pi ^{-1}(z)\,$ for all $\, t\in I\,$ and that $\, \tilde{H}(0,1)=\tilde{z}\,$ while   $\, \tilde{H}(1,1)=\tilde{\sigma}(1)= q\tilde{z}\,$. In other words,  $\, \tilde{H}(t,1), \, t\in I,\,$ is an arc in $\, \pi ^{-1}(z)\,$ from $\,\tilde{z}\,$ to $\,\tilde{\sigma}(1)= q\tilde{z}\,$. As $\, \pi ^{-1}(z)\,$ consists of points of the form $\, r \tilde{z}\,$ with $\, r\in \R\setminus \{0\}\,$ it follows that $\, q>0\,$.

That completes the proof of Lemma 2.1.  
\end{proof}

The diagram
\begin{displaymath}
\begin{CD}
\R^7 \setminus \{0\}  @>{\Phi}>>   \R^7 \setminus \{0\}\\
@VV{\pi}V                                     @VV{\pi}V \\ 
 \P (\R ^7 )           @>{\eta_{\P}}>>         \P (\R ^7 ) 
\end{CD}
\end{displaymath}
\noindent
commutes. Since the components  $\, \varphi _1, ... , \varphi _7\,$ of $\, \Phi\,$ are polynomials, the map $\, \Phi : \R^7 \setminus \{0\}\rightarrow \R^7 \setminus \{0\}\,$ is continuous. It follows that the map \break $\,\eta_{\P}:\P (\R ^7 )\rightarrow \P (\R ^7 ) \,$ is also continuous. The map $\,\eta_{\P}\,$ is bijective \cite[Proposition 2.2]{DL03} and the projective space  $\,\P (\R ^7 )\,$ is compact and Hausdorff. Therefore  $\,\eta_{\P}\,$ is a homeomorphism. Let us choose a point $\, z_0\in \P (\R ^7 )  \,$  and let us denote by $\, z_1\,$ the point $\, {\eta_{\P}}(z_0)\in \P (\R ^7 )  \,$. The homeomorphism  $\,\eta_{\P}\,$ induces a group isomorphism
\begin{equation*}
 {\eta_{\P}}_*: \pi _1( \P (\R ^7 ), z_0 ) \xrightarrow{\cong}  \pi _1( \P (\R ^7 ), z_1 )\,\, .
\end{equation*}  

Let us choose a point $\, \tilde{z}_0 \in \pi ^{-1}(z_0) \subset \R^7 \setminus \{0\}\,$. Let us denote by  $\, \tilde{z}_1\,$ the point  $\, \Phi( \tilde{z}_0)\in \R^7 \setminus \{0\} \,$. Then   $\, \tilde{z}_1= \Phi( \tilde{z}_0)\in\pi ^{-1}(z_1)\,$.
 
Since the space $\, \R^7 \setminus \{0\}\,$ is path-connected, we can find  a path $\, \tilde{\alpha}:I\rightarrow  \R^7 \setminus \{0\}\,$ such that $\, \tilde{\alpha}(0)=  \tilde{z}_0\,$ and $\, \tilde{\alpha}(1)= - \tilde{z}_0\,$. Then the composition $\, \alpha = \pi \circ  \tilde{\alpha}:I\rightarrow \P (\R ^7 )  \,$ is a loop in $\, \P (\R ^7 )  \,$ which starts and ends at $\, z_0= \pi ( \tilde{z}_0) =  \pi ( -\tilde{z}_0).\,$ The path $\, \tilde{\alpha}\,$ is a lift of the loop $\, \alpha\,$ to $\, \R^7 \setminus \{0\}\,$ which starts at $\, \tilde{z}_0\,$ and ends at  $\, -\tilde{z}_0\,$. Thus, according to Lemma 2.1, the loop $\, \alpha\,$ represents the nontrivial element of $\,  \pi _1( \P (\R ^7 ), z_0 )\,$.

Let us now consider the path $\, \tilde{\beta}=\Phi \circ \tilde{\alpha}:I\rightarrow \R^7 \setminus \{0\}\,$ in  $\, \R^7 \setminus \{0\}\,$ starting at $\,  \tilde{z}_1= \Phi( \tilde{z}_0)\in\pi ^{-1}(z_1) \,$ and ending at $\, \tilde{z}_2 =  \Phi( -\tilde{z}_0)\in \pi ^{-1}(z_1) \,$. If $\, d=\text{deg}(\Phi)\,$ is the degree of $\, \Phi\,$ then, by the definition of the degree, $\,\tilde{z}_2 =  \Phi( -\tilde{z}_0)= (-1)^d  \Phi( \tilde{z}_0)=(-1)^d  \tilde{z}_1\,$.

The composition $\, \beta= \pi \circ  \tilde{\beta}: I\rightarrow  \P (\R ^7) \,$ is a loop in $\,  \P (\R ^7) \,$  which starts and ends at the point $\, z_1=\pi ( \tilde{z}_1)=\pi ( \tilde{z}_2) \,$. Moreover,
\begin{equation*}
\beta= \pi \circ  \tilde{\beta}=  \pi \circ \Phi \circ \tilde{\alpha}= \eta_{\P}\circ \pi \circ \tilde{\alpha}= \eta_{\P}\circ\alpha\,\,.
\end{equation*}
Thus the homotopy class $\, [ \beta ]\,$ of the loop $\, \beta \,$  in $\, \pi _1( \P (\R ^7 ), z_1 )\,$ is equal to $\, [  \eta_{\P}\circ\alpha ] = {\eta_{\P}}_* [ \alpha ] \,$. As the homotopy class $\, [ \alpha ]\,$ of $\, \alpha \,$  in $\, \pi _1( \P (\R ^7 ), z_0 )\,$ was non-trivial  and $\, {\eta_{\P}}_*: \pi _1( \P (\R ^7 ), z_0 ) \rightarrow  \pi _1( \P (\R ^7 ), z_1 ) \,$  was an isomorphism, it follows that   $\, \beta \,$ represents the non-trivial element of $\, \pi _1( \P (\R ^7 ), z_1 ) \,$. 

On the other hand the path $\, \tilde{\beta}:I\rightarrow \R^7 \setminus \{0\}\,$ is a lifting of  the loop $\, \beta \,$ to $\, \R^7 \setminus \{0\}\,$ which starts at the point $\, \tilde{z}_1\in \R^7 \setminus \{0\}\,$ and ends at the point $\,  \tilde{z}_2 =(-1)^d  \tilde{z}_1\,$. Since  $\, \beta \,$ represents the non-trivial element of $\, \pi _1( \P (\R ^7 ), z_1 ) \,$, it follows from Lemma 2.1 that $\, (-1)^d <0\,$ and, thus, the degree $\, d=\text{deg}(\Phi)\,$ is odd.

That completes the proof of Proposition 2.1.     
\end{proof}

\begin{rem}
The space $\, \R^7 \setminus \{0\}\,$ is homotopy equivalent to the $6$-dimen- sional sphere $ \, S^6\,$. Its homology group $\, H_6(\R^7 \setminus \{0\}, \Z )\,$ with coefficients in $\, \Z\,$ in dimension $6$ is isomorphic to the group of integers,  $\, H_6(\R^7 \setminus \{0\}, \Z )\cong \Z\,$. 
The continuous mapping $\, \Phi: \R^7 \setminus \{0\}\rightarrow \R^7 \setminus \{0\}\,$ induces a group homomorphism $\, \Phi_*: H_6(\R^7 \setminus \{0\}, \Z ) \rightarrow  H_6(\R^7 \setminus \{0\}, \Z )\,$ which is given by multiplication by an integer usually called the (topological) degree of the map $\, \Phi\,$. Since the map $\, \eta_{\P}:  \P (\R ^7 )\rightarrow  \P (\R ^7 )\,$ is a homeomorphism and the algebraic degree $\, d=\text{deg}(\Phi)\,$ in the sense of this paper is odd, it is rather easy to see that also the mapping $\, \Phi: \R^7 \setminus \{0\}\rightarrow \R^7 \setminus \{0\}\,$ is a homeomorphism. It follows that $\, \Phi_*: H_6(\R^7 \setminus \{0\}, \Z ) \rightarrow  H_6(\R^7 \setminus \{0\}, \Z )\,$ is an isomorphism and, hence, its topological degree can only be equal to $\, \pm 1\,$. Thus the topological degree of the map  $\, \Phi\,$ and its algebraic degree $\,d=\text{deg}(\Phi)\,$ in the sense of {\rm\cite{DF06}} and of the present paper are different notions, at least when the algebraic degree  $\,d=\text{deg}(\Phi)$ is equal to $3$ or $5$. 
\end{rem}
\noindent
Now let $A$ be an 8-dimensional real quadratic division algebra. Since $A$ is a quadratic algebra, Frobenius's lemma \cite{Fr78},\cite{KR92} applies. It asserts that the set 
$V = \{ v \in A \setminus \R1\ |\ v^2 \in \R1 \} \cup \{0\}$ of all purely imaginary elements in $A$ is a hyperplane in $A$, such that $A = \R1 \oplus V$. This Frobenius 
decomposition of $A$ gives rise to the $\R$-linear maps $\varrho: A \to \R$ and $\iota: A \to V$ such that $a = \varrho(a)1 + \iota(a)$ for all $a \in A$. The induced algebra 
structure on $V$, i.e.~the bilinear map $\eta: V \times V \to V,\ 
\eta(v,w) = \iota(vw)$, is anticommutative. Therefore it may be identified with the linear map $\eta: V \wedge V \to V,\ \eta(v \wedge w) = \iota(vw)$. Since $A$ is a 
division algebra, this linear map $\eta$ is dissident \cite{Os62}. Any choice of a basis in $V$ identifies $\eta$ with a dissident map $\eta$ on $\R^7$, and the degree of 
$\eta$ does not depend on the chosen basis. It is therefore justified to define the {\it degree} of an 8-dimensional real quadratic division algebra $A$ by $\deg(A) := 
\deg(\eta)$. 
\begin{cor}
The degree of any 8-dimensional real quadratic division algebra is 1, 3, or 5.
\end{cor}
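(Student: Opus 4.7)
The plan is very short, because the corollary is essentially a packaging of Proposition 2.1 together with the upper bound already recorded earlier in the section. The setup immediately preceding the statement explains that to every $A \in \mathscr{D}_8^q$ one associates, via the Frobenius decomposition $A = \R1 \oplus V$, a dissident map $\eta : V \wedge V \to V$, and upon fixing a basis this becomes a dissident map on $\R^7$ whose degree is by definition $\deg(A)$. So it suffices to prove the statement at the level of dissident maps on $\R^7$.

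First I would invoke \cite[Theorem 2.4]{DF06}, already quoted in the paragraph preceding Proposition 2.1, which asserts that a lifting $\Phi$ of $\eta_\P$ exists, is unique up to a nonzero real scalar, and satisfies $1 \le \deg(\Phi) \le 5$. Since $\deg(\eta) := \deg(\Phi)$ by definition, this gives $\deg(\eta) \in \{1,2,3,4,5\}$. Then I would appeal to Proposition 2.1, which was just proved and asserts that $\deg(\eta)$ is odd. The intersection of $\{1,2,3,4,5\}$ with the odd integers is $\{1,3,5\}$, and the corollary follows.

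There is essentially no obstacle: all the substantive work has already been done, both in \cite{DF06} (the upper bound and the well-definedness of the degree, including independence of the chosen basis) and in Proposition 2.1 (the parity statement). The only thing worth being slightly careful about is noting explicitly that $\deg(A)$ is well-defined as $\deg(\eta)$, i.e.\ is independent of the basis of $V$ chosen to identify $V$ with $\R^7$; but this has already been recorded in the paragraph introducing the definition, so nothing new needs to be proved. The write-up should therefore consist of one or two sentences citing Theorem 2.4 of \cite{DF06} for $1 \le \deg(A) \le 5$ and Proposition 2.1 for the oddness, and concluding $\deg(A) \in \{1,3,5\}$.
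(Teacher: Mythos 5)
Your proposal is correct and follows exactly the paper's own argument: the paper's proof likewise combines the bound $1 \le \deg(\eta) \le 5$ from Theorem 2.4 of \cite{DF06} with the oddness established in Proposition 2.1. Nothing further is needed.
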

\begin{proof} 
Let $A \in \mathscr{D}_8^q$. Then $\deg(A) = \deg(\eta) = d$, where $1 \le d \le 5$ by \cite[Theorem 2.4]{DF06}, and $\deg(\eta)$ is odd by Proposition 2.1. 
\end{proof}

\begin{cor}
The category $\mathscr{D}_8^q$ decomposes into its non-empty full subcate\-gories $\mathscr{D}_8^{q1}, \mathscr{D}_8^{q3}$, and $\mathscr{D}_8^{q5}$.
\end{cor}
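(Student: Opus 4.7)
The plan is to assemble three ingredients, all of which have essentially been established by this point in the paper. First I would observe that Corollary 2.2 tells us every $A \in \mathscr{D}_8^q$ satisfies $\deg(A) \in \{1,3,5\}$, so as a class of objects we have $\mathscr{D}_8^q = \mathscr{D}_8^{q1} \cup \mathscr{D}_8^{q3} \cup \mathscr{D}_8^{q5}$. Since each $\mathscr{D}_8^{qd}$ was introduced as the full subcategory of $\mathscr{D}_8^q$ on those objects, this object-level covering automatically lifts to a covering on morphisms, giving the asserted decomposition of categories.

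Next I would verify disjointness. This reduces to the fact, recorded in the introduction with reference to \cite{DF06}, that the degree of an 8-dimensional real quadratic division algebra is invariant under isomorphism; together with the first step this yields that $\mathscr{D}_8^q$ is partitioned by the three full subcategories. Finally, non-emptiness of each $\mathscr{D}_8^{qd}$ for $d \in \{1,3,5\}$ is immediate from the explicit examples of 8-dimensional real quadratic division algebras of degree $1$, $3$, and $5$ constructed in \cite{DF06} and quoted in the introduction.

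There is essentially no technical obstacle here: the entire novelty of the corollary lies in ruling out degrees $2$ and $4$, which is precisely the content of Proposition 2.1. Corollary 2.3 is best viewed as a categorical reformulation of that proposition, combined with the pre-existing bound $1 \le \deg(A) \le 5$ from \cite[Theorem 2.4]{DF06} and the pre-existing examples realizing each odd value.
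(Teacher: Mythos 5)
Your argument establishes the decomposition only at the level of objects, and the step where you say the object-level covering ``automatically lifts to a covering on morphisms'' is precisely where the actual content of this corollary lies. That $\mathscr{D}_8^q$ decomposes into the blocks $\mathscr{D}_8^{q1}$, $\mathscr{D}_8^{q3}$, $\mathscr{D}_8^{q5}$ means there are \emph{no} morphisms between objects of different degrees. The union of the morphism classes of three full subcategories on a partition of the objects only captures the morphisms $f: A \to A'$ with $\deg(A) = \deg(A')$; a hypothetical morphism from an algebra of degree $1$ to one of degree $3$ would lie in none of the blocks, and nothing about fullness rules this out automatically. Your appeal to the isomorphism-invariance of the degree is the right tool, but it only bites once you know that every morphism of $\mathscr{D}_8^q$ is an isomorphism --- and that is the step you omit. (Note also that ``disjointness'' of the object classes, which is what your second step targets, is trivial: the degree is a single well-defined number attached to each algebra.)

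The paper supplies the missing step explicitly: a morphism $f: A \to A'$ in $\mathscr{D}_8^q$ is a non-zero linear map with $f(xy)=f(x)f(y)$; it is injective because $A$ is a division algebra (if $f(x)=0$ with $x \neq 0$, then $f(A)=f(xA)=f(x)f(A)=0$, contradicting $f \neq 0$), hence bijective since $\dim A = \dim A' = 8$, hence an algebra isomorphism. This induces an isomorphism of the associated dissident maps on the purely imaginary hyperplanes, and \cite[Proposition 3.1]{DF06} then yields $\deg(A)=\deg(A')$, so every morphism stays within a single block. With that inserted, the remainder of your outline --- the object-level covering from the preceding corollary and the non-emptiness of each block via the examples constructed in \cite[Section 6]{DF06} --- agrees with the paper. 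Your closing remark that the entire novelty lies in excluding degrees $2$ and $4$ is therefore not quite right: that is the content of the \emph{previous} corollary, whereas the added value of this one is exactly the morphism-level statement you treated as automatic.
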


\begin{proof} 
Corollary 2.4 states that the object class of $\mathscr{D}_8^q$ is the disjoint union of the object classes of $\mathscr{D}_8^{q1}, \mathscr{D}_8^{q3}$, and 
$\mathscr{D}_8^{q5}$.

\vspace{0,1cm}
Let $f: A \to A'$ be a morphism in $\mathscr{D}_8^q$. The algebra structures on $A$ and $A'$ induce dissident maps $\eta$ and $\eta'$ on the purely imaginary hyperplanes 
$V$ and $V'$ of $A$ and $A'$ respectively. Now $f$ is injective because $A$ is a division algebra, and furthermore even bijective because $\dim(A) = \dim(A')$ is finite. 
So $f$ is an isomorphism of algebras. It induces an isomorphism of dissident maps $\sigma: \eta \to \eta'$, i.e.~a linear bijection $\sigma: V \to V'$ satisfying 
$\sigma\eta(v \wedge w) = \eta'(\sigma(v) \wedge \sigma(w))$ for all $v,w \in V$. We conclude with \cite[Proposition 3.1]{DF06} that $\deg(\eta) = \deg(\eta')$, hence 
$\deg(A) = \deg(A')$. Thus $f$ is a morphism in $\mathscr{D}_8^{qd}$ for some $d \in \{ 1,3,5 \}$. 

Altogether this proves the decomposition $\mathscr{D}_8^q = \mathscr{D}_8^{q1} \amalg \mathscr{D}_8^{q3} \amalg \mathscr{D}_8^{q5}$ of the category $\mathscr{D}_8^q$. 
Non-emptyness of its blocks $\mathscr{D}_8^{q1}, \mathscr{D}_8^{q3}$, and $\mathscr{D}_8^{q5}$ follows from \cite[Section 6]{DF06}, where objects are constructed 
for each of them. 
\end{proof}

\section{On the structure of $\mathscr{D}_8^{q1}$, $\mathscr{D}_8^{q3}$, and $\mathscr{D}_8^{q5}$}

Corollary 2.5 reduces the problem of understanding the structure of $\mathscr{D}_8^q$ to the problem of understanding the structures of $\mathscr{D}_8^{q1}, 
\mathscr{D}_8^{q3}$, and $\mathscr{D}_8^{q5}$. We proceed to summarize the present state of knowledge regarding the latter problem.

A {\it dissident triple} $(V,\xi,\eta)$ consists of a (finite-dimensional) Euclidean space $V = (V,\langle\cdot,\cdot\rangle)$, a linear form $\xi: V \wedge V \to \R$, and a 
dissident map $\eta: V \wedge V \to V$. The class of all dissident triples forms a category $\mathscr{V}$ whose morphisms $\varphi: (V,\xi,\eta) \to (V',\xi',\eta')$ are 
orthogonal linear maps $\varphi: V \to V'$ satisfying $\xi = \xi'(\varphi \wedge \varphi)$ and $\varphi\eta = \eta'(\varphi \wedge \varphi)$. If $(V,\xi,\eta)$ is a dissident 
triple, then the vector space $\mathscr{F}(V,\xi,\eta) = \R \times V$, equipped with the multiplication
\[ (\alpha,v)(\beta,w) = (\alpha\beta - \langle v,w \rangle + \xi(v \wedge w),\ \alpha w + \beta v + \eta(v \wedge w)), \]
is a real quadratic division algebra. If $\varphi: (V,\xi,\eta) \to (V',\xi',\eta')$ is a morphism of dissident triples, then the linear map
\[ \mathscr{F}(\varphi): \mathscr{F}(V,\xi,\eta) \to \mathscr{F}(V',\xi',\eta'),\ \mathscr{F}(\varphi)(\alpha,v) = (\alpha,\varphi(v)) \]
is a morphism of real quadratic division algebras. It is well-known \cite{Di00},\cite{DO02} that Osborn's theorem \cite{Os62} can be rephrased in the language of categories 
and functors as follows.
\begin{theo}
The functor $\mathscr{F}: \mathscr{V} \to \mathscr{D}^q$ is an equivalence of categories.
\end{theo}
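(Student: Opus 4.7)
The natural approach is to exhibit a quasi-inverse $\mathscr{G}: \mathscr{D}^q \to \mathscr{V}$ and then verify the two unit/counit natural isomorphisms. On an object $A \in \mathscr{D}^q$, Frobenius's lemma (already used in the paragraph preceding Corollary 2.4) supplies a canonical decomposition $A = \R1 \oplus V_A$ with $V_A = \{v \in A\setminus\R1 \mid v^2 \in \R1\} \cup \{0\}$, and I would use this to define $\mathscr{G}(A) = (V_A, \xi_A, \eta_A)$ as follows. For $v,w \in V_A$ write $vw = \mu(v,w)1 + \nu(v,w)$ with $\mu(v,w) \in \R$ and $\nu(v,w) \in V_A$; then $v^2 \in \R1$ forces the symmetric part of $\nu$ to vanish and shows that $-\mu(v,v)$ is a positive definite quadratic form on $V_A$ (this last fact is Osborn's theorem \cite{Os62}, which depends crucially on $A$ being a division algebra). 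Polarizing yields the inner product $\langle v,w\rangle := -\tfrac{1}{2}(\mu(v,w)+\mu(w,v))$, and I set $\xi_A(v\wedge w) := \tfrac{1}{2}(\mu(v,w)-\mu(w,v))$ and $\eta_A(v\wedge w) := \nu(v,w)$. Osborn's theorem also gives that $\eta_A$ is dissident, so $(V_A,\xi_A,\eta_A) \in \mathscr{V}$.

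On morphisms I would argue as follows. A morphism $f: A \to A'$ in $\mathscr{D}^q$ is nonzero and multiplicative; then $f(1)^2 = f(1)$ together with $f \ne 0$ forces $f(1) = 1_{A'}$ (because $A'$ is a division algebra, the equation $f(1)(f(1)-1) = 0$ admits no nonzero root other than $1$), and injectivity of $f$ follows from the division property so that $f$ is in fact an algebra isomorphism. For $v \in V_A\setminus\{0\}$ the identity $f(v)^2 = f(v^2) \in \R1_{A'}$ shows $f(v) \in V_{A'} \cup \R1_{A'}$, and $f(v) \in \R1_{A'}$ would contradict $f(1) = 1$ together with injectivity; hence $f$ restricts to a linear map $\mathscr{G}(f) := f|_{V_A}: V_A \to V_{A'}$. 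Applying $f$ to $v^2 = -\langle v,v\rangle 1$ gives $\langle \mathscr{G}(f)v, \mathscr{G}(f)v\rangle' = \langle v,v\rangle$, so $\mathscr{G}(f)$ is orthogonal, and expanding $f(vw) = f(v)f(w)$ in scalar and vector parts yields the compatibility of $\mathscr{G}(f)$ with $\xi$ and $\eta$. Thus $\mathscr{G}(f)$ is a morphism in $\mathscr{V}$, and functoriality of $\mathscr{G}$ is immediate.

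The two remaining verifications are then essentially formal. For every dissident triple $(V,\xi,\eta)$, unwinding the definition of $\mathscr{F}$ shows that the purely imaginary hyperplane of $\mathscr{F}(V,\xi,\eta)$ is precisely $\{0\}\times V$, with the structure forms there recovering $(\langle\cdot,\cdot\rangle, \xi, \eta)$; the projection $(\alpha,v)\mapsto v$ furnishes a natural isomorphism $\mathscr{G}\mathscr{F} \cong \mathrm{id}_{\mathscr{V}}$. Conversely, for every $A \in \mathscr{D}^q$ the map $\R \times V_A \to A$, $(\alpha,v) \mapsto \alpha 1 + v$, is by construction an isomorphism of algebras natural in $A$, providing $\mathscr{F}\mathscr{G} \cong \mathrm{id}_{\mathscr{D}^q}$.

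The only non-formal ingredient is the construction of $\mathscr{G}$ on objects, and there the core obstacle is the assertion that $\langle\cdot,\cdot\rangle$ is positive definite and that $\eta_A$ satisfies the dissident condition; both facts are precisely the content of Osborn's theorem \cite{Os62} applied to the Frobenius decomposition, so I would simply cite it rather than reprove it. Everything else reduces to routine bookkeeping: checking that $\mathscr{F}$ and $\mathscr{G}$ are well-defined functors and that the two candidate natural transformations above are natural and componentwise invertible.
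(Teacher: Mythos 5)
The paper gives no proof of this theorem at all---it is stated as a categorical rephrasing of Osborn's theorem with references to \cite{Di00},\cite{DO02}---and your construction of the quasi-inverse via the Frobenius decomposition, with Osborn's theorem \cite{Os62} supplying positive definiteness of $\langle\cdot,\cdot\rangle$ and the dissident property of $\eta_A$, is precisely the standard argument those sources rely on; it is correct. One parenthetical claim is false, though harmless: a morphism $f\colon A\to A'$ in $\mathscr{D}^q$ is injective but need not be an algebra isomorphism when $\dim A<\dim A'$ (e.g.\ the inclusion $\R\hookrightarrow\C$), and your argument in fact only uses injectivity and $f(1)=1'$, so nothing downstream is affected.
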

\noindent
For each $d \in \{ 1,3,5 \}$ we denote by $\mathscr{V}_7^d$ the full subcategory of $\mathscr{V}$ formed by all dissident triples $(V,\xi,\eta)$ satisfying $\dim(V) = 7$ and 
$\deg(\eta) = d$. Then the equivalence of categories $\mathscr{F}: \mathscr{V} \to \mathscr{D}^q$ induces equivalences of categories $\mathscr{F}_7^d: \mathscr{V}_7^d \to 
\mathscr{D}_8^{qd}$ for all $d \in \{ 1,3,5 \}$. The category $\mathscr{V}_7^1$ admits an equivalent description entirely in terms of matrices, which we proceed to recall.

The octonion algebra $\O$ is well-known to be quadratic. Hence it has Frobenius decomposition $\O = \R1 \oplus V$, and thereby it determines $\R$-linear maps 
$\varrho: \O \to \R$ and $\iota: \O \to V$ such that $a = \varrho(a)1 + \iota(a)$ for all $a \in \O$. The symmetric $\R$-bilinear form 
\[ \O \times \O \to \R,\ \langle x,y \rangle = 2 \varrho(x)\varrho(y) - \frac{1}{2}\varrho(xy+yx) \]
is well-known to be positive definite, thus equipping $\O$ with the structure of a Euclidean space. Every algebra automorphism $\alpha \in {\rm Aut}(\O)$ fixes the unity 1 of 
$\O$ and is orthogonal. Since $1^\perp = V$, it induces an orthogonal linear endomorphism $\alpha_V \in {\rm O}(V)$. The map $\nu: {\rm Aut}(\O) \to {\rm O}(V),\ 
\nu(\alpha) = \alpha_V$ is an injective group homomorphism. Choosing an orthonormal basis in $V$, the subgroup $\nu({\rm Aut}(\O)) < {\rm O}(V)$ is identified with a subgroup 
of ${\rm O}(7)$, which classically is denoted by $\G_2$. Simultaneously, the dissident map 
\[ \eta: V \wedge V \to V,\ \eta(v \wedge w) = \iota(vw) \] 
is identified with a {\it vector product map} $\R^7 \wedge \R^7 \to \R^7,\ v \wedge w \mapsto v \times w$. Now denote by $\R^{7 \times 7}$ the set of all real 
$7 \times 7$-matrices, and by $\R_{{\rm ant}}^{7 \times 7},\ \R_{{\rm pds}}^{7 \times 7},\ \R_{{\rm spds}}^{7 \times 7}$ the subsets of $\R^{7 \times 7}$ consisting of all 
matrices which are antisymmetric, po\-sitive definite symmetric, and positive definite symmetric of determinant 1 respectively. We view the matrix quadruple set
\[ \mathscr{Q} = \R_{{\rm ant}}^{7 \times 7} \times \R_{{\rm ant}}^{7 \times 7} \times \R_{{\rm pds}}^{7 \times 7} \times \R_{{\rm spds}}^{7 \times 7} \]
as the object set of a groupoid $\mathscr{Q}$ whose morphisms
\[ S: (A,B,C,D) \to (A',B',C',D') \]
are the orthogonal matrices $S \in \G_2$ which satisfy
\[ (SAS^t,SBS^t,SCS^t,SDS^t) = (A',B',C',D'). \]
Then the groupoid $\mathscr{Q}$ and the category $\mathscr{V}_7^1$ are related as follows. (For proofs see \cite[Section 5]{DF06}.)
\begin{theo}
(i) If $(A,B,C,D) \in \mathscr{Q}$ then $\mathscr{G}(A,B,C,D) = \left( \R^7,\xi,\eta \right)$,\linebreak with $\xi(v \wedge w) = v^tAw$ and 
$\eta(v \wedge w) = (B+C)D(Dv \times Dw)$, is in $\mathscr{V}_7^1$.
\\[1ex]
(ii) If $S: (A,B,C,D) \to (A',B',C',D')$ is a morphism in $\mathscr{Q}$ then $\mathscr{G}(S): \mathscr{G}(A,B,C,D) \to \mathscr{G}(A',B',C',D')$, given by 
$\mathscr{G}(S)(v) = Sv$, is a morphism in $\mathscr{V}_7^1$.
\\[1ex]
(iii) The functor $\mathscr{G}: \mathscr{Q} \to \mathscr{V}_7^1$ is an equivalence of categories.
\end{theo}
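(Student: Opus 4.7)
The plan is to treat the three parts of Theorem 3.2 in sequence, with the substantive work reserved for part (iii).

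For part (i), antisymmetry of $A$ and of the octonion vector product in its two arguments ensure that $\xi$ and $\eta$ descend to well-defined linear maps on $\R^7 \wedge \R^7$. For the degree I would exhibit the explicit linear lifting
\[
\Phi(v) = (B+C)^{-t}\, v .
\]
The cyclic identity $\langle a, b \times c\rangle = \langle a \times b, c\rangle$ for the octonion vector product gives
\[
\Phi(v)^t \eta(v \wedge w) = v^t D(Dv \times Dw) = (Dv)^t(Dv \times Dw) = 0,
\]
so $[\Phi(v)] = \eta_\P([v])$. Since $C$ is positive definite and $B$ antisymmetric, $(B+C)^{-t}$ is invertible, so the seven components of $\Phi$ are linearly independent linear forms and hence relatively prime; the uniqueness of liftings \cite[Theorem 2.4]{DF06} then forces $\deg(\eta) = 1$. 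For the dissident property I would assume $\eta(v \wedge w) = \alpha v + \beta w$ for linearly independent $v, w$ and pair with $\Phi(v)$ and $\Phi(w)$; the orthogonality just established yields a homogeneous $2 \times 2$ linear system in $(\alpha, \beta)$ with entries $v^t P v$, $v^t P w$, $w^t P v$, $w^t P w$, where $P = (B+C)^{-1}$. Decomposing $P = \Sigma + Q$ into positive definite symmetric part $\Sigma$ (positive definite because $C$ is) and antisymmetric part $Q$, the determinant equals $(v^t \Sigma v)(w^t \Sigma w) - (v^t \Sigma w)^2 + (v^t Q w)^2$, strictly positive by Cauchy--Schwarz, so $\alpha = \beta = 0$; this contradicts $\eta(v \wedge w) \ne 0$, which holds because $(B+C)D$ is invertible and $Dv \times Dw \ne 0$ on the linearly independent pair $Dv, Dw$.

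For part (ii), the verification is direct. Orthogonality $S^t S = I$ together with $A' = SAS^t$ gives $(Sv)^t A'(Sw) = v^t Aw$, i.e.\ the $\xi$-intertwining. For the $\eta$-intertwining, $D' Sv = SDS^t \cdot Sv = SDv$; then $SDv \times SDw = S(Dv \times Dw)$ since $S \in \G_2$ preserves the vector product by definition; finally $(B'+C')D'(D'Sv \times D'Sw) = S(B+C)S^t \cdot S(Dv \times Dw) = S\,\eta(v \wedge w)$.

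For part (iii), faithfulness is immediate. For fullness, a morphism $\varphi$ is an orthogonal $S \in {\rm O}(7)$; the $\xi$-condition gives $SAS^t = A'$. The $\eta$-intertwining, transported through the linear lifts of (i) and using their scalar uniqueness, forces $B' + C' = \lambda S(B+C) S^t$ for some $\lambda > 0$. Splitting into symmetric and antisymmetric parts yields $B' = \lambda SBS^t$ and $C' = \lambda SCS^t$, with $\lambda$ pinned to $1$ by the constraint $\det D = \det D' = 1$ emerging in the final step. The residual $\eta$-intertwining then gives $SDS^t = D'$ and, after all matrix factors cancel, the identity $S(u \times v) = Su \times Sv$ for all $u, v$, i.e.\ $S \in \G_2$. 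Essential surjectivity is the substantive step: given $(V, \xi, \eta) \in \mathscr{V}_7^1$, fix an orthonormal basis to identify $V$ with $\R^7$, so $\xi$ corresponds to a unique antisymmetric $A$; the degree-$1$ hypothesis provides a linear lift $\Phi(v) = Mv$ with $M \in {\rm GL}_7(\R)$, and the relation $(Mv)^t \eta(v \wedge w) = 0$ exhibits $\eta$ as built from a twisted vector product on $\R^7$. The Cartan--Schouten classification of seven-dimensional vector products (all conjugate under ${\rm O}(7)$ to the standard octonion one, with the conjugation refinable to $\G_2$) together with polar decomposition and a Sylvester-type normalization then produces $B \in \R_{\rm ant}^{7 \times 7}$, $C \in \R_{\rm pds}^{7 \times 7}$, $D \in \R_{\rm spds}^{7 \times 7}$ with $\mathscr{G}(A,B,C,D) \cong (V, \xi, \eta)$.

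I expect the main obstacle to be this essential-surjectivity step: reducing an arbitrary degree-$1$ dissident map on $\R^7$ to the canonical matrix form relies on controlling the orbit structure of seven-dimensional vector products under ${\rm O}(7)$ and, subsequently, under $\G_2$. The detailed realisation of this normalisation is carried out in \cite[Section 5]{DF06}, which is the reference I would follow.
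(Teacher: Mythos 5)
The paper does not actually prove Theorem 3.2: it states the result and refers entirely to \cite[Section 5]{DF06} for the proofs, so there is no in-paper argument to measure you against. Your proposal is consistent with that source, and the parts you work out in full are correct and are genuine added value over the paper's bare citation: the linear lifting $\Phi(v)=(B+C)^{-t}v$, the orthogonality computation via $(Dv)^t(Dv\times Dw)=0$, the observation that the symmetric part of $(B+C)^{-1}$ is positive definite so that the $2\times 2$ determinant $(v^t\Sigma v)(w^t\Sigma w)-(v^t\Sigma w)^2+(v^tQw)^2$ is strictly positive (giving dissidence), and the direct verification of (ii) all check out.

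That said, your self-contained content stops exactly where the substance of the theorem lies. In (iii), fullness requires showing that from $D(Dv\times Dw)=\mu E\,S^t(SEv\times SEw)$ (after cancelling $S(B+C)$) one can conclude $E=D$, $\mu=1$, and $S\in\G_2$; this is a uniqueness-of-normal-form statement that you assert rather than prove. Essential surjectivity requires the structure theorem that every degree-$1$ dissident map on $\R^7$ has the form $(B+C)D(Dv\times Dw)$; the step from ``$(Mv)^t\eta(v\wedge w)=0$'' to ``$\eta$ is a twisted vector product'' is a real leap, since one must also establish the norm identity $|\pi(x,y)|^2=|x|^2|y|^2-\langle x,y\rangle^2$ for the candidate vector product, which does not follow from orthogonality alone. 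You acknowledge both points and defer to \cite[Section 5]{DF06}, which is precisely what the paper does, so your treatment matches the paper's; but as a blind proof it leaves the central normalisation and uniqueness arguments unestablished.
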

\noindent
Composing the functors $\mathscr{G}$ and $\mathscr{F}_7^1$ to $\mathscr{H} = \mathscr{F}_7^1 \mathscr{G}$, we arrive at the following explicit description of the category 
$\mathscr{D}_8^{q1}$ entirely in terms of matrices.
\begin{cor}
The functor $\mathscr{H}: \mathscr{Q} \to \mathscr{D}_8^{q1}$ is an equivalence of categories. It is given on objects by $\mathscr{H}(A,B,C,D) = \R \times \R^7$, 
with multiplication $(\alpha,v)(\beta,w) = (\alpha\beta - v^tw + v^tAw, \alpha w + \beta v + (B+C)D(Dv \times Dw))$, and on morphisms by 
$\mathscr{H}(S)(\alpha,v) = (\alpha,Sv)$. 
\end{cor}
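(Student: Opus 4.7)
The plan is to read off Corollary 2.6 as a direct compilation of the two equivalences that have just been set up, with the only real work being the substitution that produces the explicit multiplication formula. Specifically, $\mathscr{H}$ is defined as the composite $\mathscr{F}_7^1 \circ \mathscr{G}$, so the first step is to note that $\mathscr{G}: \mathscr{Q} \to \mathscr{V}_7^1$ is an equivalence by Theorem 3.2(iii), and that $\mathscr{F}_7^1: \mathscr{V}_7^1 \to \mathscr{D}_8^{q1}$ is an equivalence as pointed out in the paragraph following Theorem 3.1 (the equivalence $\mathscr{F}$ preserves the dimension of the purely imaginary part and the degree of the underlying dissident map, so it restricts to an equivalence on the corresponding full subcategories). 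Since the composition of two equivalences of categories is an equivalence of categories, $\mathscr{H}$ is an equivalence.

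To establish the object formula, I would take an arbitrary $(A,B,C,D) \in \mathscr{Q}$ and apply the two functors in turn. By Theorem 3.2(i), $\mathscr{G}(A,B,C,D) = (\R^7,\xi,\eta)$ with $\xi(v\wedge w) = v^tAw$ and $\eta(v\wedge w) = (B+C)D(Dv \times Dw)$, the Euclidean structure on $\R^7$ being the standard one. Substituting this dissident triple into the formula defining $\mathscr{F}$ just before Theorem 3.1, and using $\langle v,w\rangle = v^tw$, yields the underlying vector space $\R \times \R^7$ and the multiplication
\[ (\alpha,v)(\beta,w) = \bigl(\alpha\beta - v^tw + v^tAw,\ \alpha w + \beta v + (B+C)D(Dv \times Dw)\bigr), \]
which is precisely the formula claimed.

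For the morphism formula, given a morphism $S: (A,B,C,D) \to (A',B',C',D')$ in $\mathscr{Q}$, Theorem 3.2(ii) tells us that $\mathscr{G}(S)$ is the orthogonal map $v \mapsto Sv$. Feeding this into the definition of $\mathscr{F}$ on morphisms, namely $\mathscr{F}(\varphi)(\alpha,v) = (\alpha,\varphi(v))$, gives $\mathscr{H}(S)(\alpha,v) = (\alpha,Sv)$, as asserted.

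There is no genuine obstacle here: the corollary is essentially a bookkeeping statement that bundles together Theorems 3.1 and 3.2. The only minor thing to be careful about is that $\mathscr{F}$ really does descend to the subcategories indexed by degree, which is what legitimizes writing $\mathscr{F}_7^1$ and hence $\mathscr{H}$; this rests on the invariance of $\deg(\eta)$ under isomorphisms of dissident maps, as quoted from \cite[Proposition 3.1]{DF06} in the proof of Corollary 2.5.
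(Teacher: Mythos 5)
Your proof is correct and is exactly the paper's argument: the paper offers no separate proof beyond the sentence introducing $\mathscr{H} = \mathscr{F}_7^1\mathscr{G}$, so the corollary is indeed just the composition of the equivalences from Theorems 3.1 (restricted to $\mathscr{V}_7^1$) and 3.2, with the explicit formulas obtained by substitution precisely as you describe.
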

\noindent
On the other hand, we do not know any description of the categories $\mathscr{V}_7^3$ or $\mathscr{V}_7^5$ entirely in terms of matrices. Indeed, the 7-dimensional 
{\it dissident algebras} $(V,\eta)$ of degree 3 or 5 which are inherent in the objects $(V,\xi,\eta)$ of $\mathscr{V}_7^3$ or $\mathscr{V}_7^5$ respectively seem
hardly to be understood at present.

\vspace*{1cm}
\noindent
\begin{tabular}{@{}l@{\hspace{4,7cm}}l}
Ernst Dieterich & Matematiska institutionen\\
Ryszard Rubinsztein & Uppsala universitet\\
 & Box 480\\
 & SE-751 06 Uppsala\\
 & Sweden
\end{tabular}
\\[1ex]
\begin{tabular}{@{}l}
{\tt Ernst.Dieterich@math.uu.se}\\{\tt Ryszard.Rubinsztein@math.uu.se}
\end{tabular}

\end{document}